\newtheorem{theorem}{Theorem}
\newtheorem{corollary}[theorem]{Corollary}
\newtheorem{definition}[theorem]{Definition}
\newtheorem{example}[theorem]{Example}
\newtheorem{lemma}[theorem]{Lemma}
\newtheorem{notation}[theorem]{Notation}
\newtheorem{remark}[theorem]{Remark}
\newenvironment{proof}[1][Proof]{\noindent\textbf{#1.} }{\ \rule{0.5em}{0.5em}}
\begin{document}

\title{An Algorithm for Computing the Ratliff-Rush Closure}
\author{Ibrahim Al-Ayyoub}
\maketitle

\begin{abstract}
Let $I\subset K[x,y]$ be a $\left\langle x,y\right\rangle $-primary monomial
ideal where $K$ is a field. This paper produces an algorithm for computing
the Ratliff-Rush closure $\widetilde{I}$ for the ideal $I=\left\langle
m_{0},\ldots ,m_{n}\right\rangle $ whenever $m_{i}$ is contained in the
integral closure of the ideal $\langle x^{a_{n}},y^{b_{0}}\rangle $. This
generalizes of the work of Crispin \cite{Cri}. Also, it provides
generalizations and answers for some questions given in \cite{HJLS}, and
enables us to construct infinite families of Ratliff-Rush ideals.
\end{abstract}

\bigskip\ \ \ \ \ 

Let $R$ be a commutative Noetherian ring with unity and $I$ a regular ideal
in $R$, that is, an ideal that contains a nonzerodivisor. Then the ideals of
the form $I^{n+1}:I^{n}=\{x\in R\mid xI^{n}\subseteq I^{n+1}\}$ give the
ascending chain $I:I^{0}\subseteq I^{2}:I^{1}\subseteq \ldots \subseteq
I^{n}:I^{n+1}\subseteq \ldots $. Let%
\begin{equation*}
\widetilde{I}=\underset{n\geq 1}{\cup }(I^{n+1}:I^{n}).
\end{equation*}%
As $R$ is Noetherian, $\widetilde{I}$ $=I^{n+1}:I^{n}$ for all sufficiently
large $n$. Ratliff and Rush \cite[Theorem 2.1]{RR}\ proved that $\widetilde{I%
}$\ is the unique largest ideal for which $(\widetilde{I})^{n}=I^{n}$ for
sufficiently large $n$. The ideal $\widetilde{I}$\ is called the \textit{%
Ratliff-Rush\ closure} of $I$ and $I$ is called \textit{Ratliff-Rush} if $I=%
\widetilde{I}$.

\ \ \ \ 

As yet, there is no algorithm to compute the Ratliff-Rush closure for
regular ideals in general. To compute $\cup _{n}(I^{n+1}:I^{n})$ one needs
to find a positive integer $N$ such that $\cup _{n}(I^{n+1}:I^{n})$ $%
=I^{N+1}:I^{N}$. However, $I^{n+1}:I^{n}=I^{n+2}:I^{n+1}$\ does not imply
that $I^{n+1}:I^{n}=I^{n+3}:I^{n+2}$\ (\cite{RS}, Example (1.8)). Several
different approaches have been used to decide the Ratliff-Rush closure;
Heinzer et al. \cite{HLS} , Property (1.2), established that every power of
a regular ideal $I$ is Ratliff-Rush if and only if the associated graded
ring, $gr_{I}(R)=\oplus _{n\geq 0}I^{n}/I^{n+1}$, has a nonzerodivisor (has
positive depth). Thus the Ratliff-Rush property of an ideal is a good tool
for getting information about the depth of the graded associated ring which
is a topic of interest for many authors such as \cite{HM}, \cite{Hun} and 
\cite{Ghe}. Al-Ayyoub \cite{Ayy} used a technique that depends on the degree
count to prove that certain monomial ideals (that are the defining ideal of
certain monomial curves) are Ratliff-Rush, namely, if the ideal $I\subseteq
K[x_{1},\ldots ,x_{n}]$ with $K$ a field\ is primary to $(x_{1},\ldots
,x_{n})$ and $\widetilde{I}\cap (I:(x_{1},\ldots ,x_{n}))\subseteq I$, then $%
I$\ is Ratliff-Rush (for a proof see either Theorem (1.3) in \cite{Ayy} or
Proposition (15.4.1) in \cite{SH}). Elias \cite{Elias} established a
procedure for computing the Ratliff-Rush closure of $\mathbf{m}$-primary
ideals of a Cohen-Macaulay local ring with maximal ideal $\mathbf{m}$.
Elias' procedure depends on computing the Hilbert-Poincar\'{e} series of $I$
and then the multiplicity and the postulation number of $I$.

\ \ \ 

Let $I\subset K[x,y]$ be a $\left\langle x,y\right\rangle $-primary monomial
ideal with $I=\left\langle m_{0},\ldots ,m_{n}\right\rangle $ where $%
m_{i}=x^{a_{i}}y^{b_{i}}$ for $i=0,\ldots ,n$ with $a_{0}=b_{n}=0$. That is, 
$I=\langle y^{b_{0}},x^{a_{1}}y^{b_{1}},\ldots
,x^{a_{n-1}}y^{b_{n-1}},x^{a_{n}}\rangle $. In this paper we produce an
algorithm for computing the Ratliff-Rush closure $\widetilde{I}$ for the
ideal $I$ whenever $m_{i}\in I\left( a_{n},b_{0}\right) $, the integral
closure of the ideal $\langle x^{a_{n}},y^{b_{0}}\rangle $ (see the
definition of the integral closure in the beginning of the next section).
This gives a generalization of the work of Crispin \cite{Cri}. This
algorithm provides generalizations and answers for some questions given in 
\cite{HJLS}. Also, it enables us to construct infinite families of
Ratliff-Rush ideals. We may say that the algorithm we provide in this paper
is the very first explicit algorithm, for computing the Ratliff-Rush closure
for a wide range of monomial ideals in polynomial rings with two
indeterminates, as no theoretical background is needed, that is, the
algorithm depends\ only on elementary computations on numerical semigroups.

\ \ \ \ \ \ \ 

The algorithm is simple enough to be introduced right away and demonstrated
on an example: let $\Omega $ be the numerical semigroup in $\mathbb{Z}^{2}$
generated by the set $\{\left( a_{i},b_{i}\right) \mid i=0,\ldots ,n\}$,
that is, $\Omega =$ $\{(\alpha ,\beta )=\tsum\limits_{i=0}^{n}\lambda
_{i}\left( a_{i},b_{i}\right) \mid \lambda _{i}\in \mathbb{Z}_{\geq 0}\}$.
Let 
\begin{equation*}
S=\{(\alpha ,\beta )\mid \alpha \leq a_{n}\text{, }\beta \leq b_{0},\text{
and }(\alpha ,\beta +kb_{0})\in \Omega \text{ \ for some }k\in \mathbb{Z}_{%
\mathbb{\geq }0}\},
\end{equation*}%
and 
\begin{equation*}
T=\{(\alpha ,\beta )\mid \alpha \leq a_{n},\beta \leq b_{0},\text{ and }%
(\alpha +ka_{n},\beta )\in \Omega \text{ \ for some }k\in \mathbb{Z}_{%
\mathbb{\geq }0}\text{ }\}.
\end{equation*}%
Set 
\begin{equation*}
I_{S}=\left\langle x^{\alpha }y^{\beta }\mid (\alpha ,\beta )\in
S\right\rangle \text{ and }I_{T}=\left\langle x^{\alpha }y^{\beta }\mid
(\alpha ,\beta )\in T\right\rangle \text{.}
\end{equation*}%
\newline
Then we show that $\widetilde{I}=I_{S}\cap I_{T}$.

\ 

Before proceeding to prove this result we would like to demonstrate it by
the example below. The reader may have a look at Example (\ref{ideal-4-gens}%
) which might give an easier representation. A semigroup $S$ in $\mathbb{Z}%
^{2}$ is said to be minimally generated by a set $A\subseteq S$ if $A$ is
the smallest subset in $S$ such that whenever $\left( \alpha ,\beta \right)
\in S$, then there exists $\left( \alpha ^{\prime },\beta ^{\prime }\right)
\in A$ such that $\alpha ^{\prime }\leq \alpha $ and $\beta ^{\prime }\leq
\beta $.

\ 

\begin{example}
Let $I=\left\langle
y^{28},x^{2}y^{26},x^{10}y^{14},x^{11}y^{12},x^{15}y^{5},x^{17}\right\rangle
\subseteq I(17,28)\subset K[x,y]$. Then $\Omega =\langle p_{0},p_{1},p_{2}$, 
$p_{3},p_{4},p_{5}\rangle $ where $p_{0}=(a_{0},b_{0})=(0,28)$, $%
p_{1}=(a_{1},b_{1})=(2,26)$, $p_{2}=(a_{2},b_{2})=(10,14)$, $%
p_{3}=(a_{3},b_{3})=(11,12)$, $p_{4}=(a_{4},b_{4})=(15,5)$, and $%
p_{5}=(a_{5},b_{5})=(17,0)$. To compute $S$ consider $\{(\alpha ,\beta )\in
\Omega $ and $\alpha \leq 17\}=\{\tsum\limits_{i=0}^{5}\lambda _{i}p_{i}\mid
\lambda _{0}\in \mathbb{Z}_{\geq 0}$, $\lambda _{1}\leq 8$, $\lambda
_{i}\leq 1$ for $2\leq i\leq 5\}$. Now $S$ is minimally generated by\ $%
\{p_{0},p_{1},p_{2},p_{3},p_{4},p_{5}\}\cup \{(4,24),(6,22),(8,20),(13,10)\}$
as $(4,24)=(2a_{1},2b_{1}\func{mod}28)$, $(6,22)=(3a_{1},3b_{1}\func{mod}28)$%
, $(8,20)=(4a_{1},4b_{1}\func{mod}28)$, and $(13,10)=(a_{1}+a_{3},b_{1}+b_{3}%
\func{mod}28)$. Thus 
\begin{equation*}
I_{S}=\langle
y^{28},x^{2}y^{26},x^{4}y^{24},x^{6}y^{22},x^{8}y^{20},x^{10}y^{14},x^{11}y^{12},x^{13}y^{10},x^{15}y^{5},x^{17}\rangle .
\end{equation*}%
Similarly, $T$ is minimally generated by $%
\{p_{0},p_{1},p_{2},p_{3},p_{4},p_{5}\}\cup \{(13,10),(9,17),(8,19),(7,22)$, 
$(5,24)\}$ as $(13,10)=(2a_{4}\func{mod}17,2b_{4})$, $(9,17)=(a_{3}+a_{4}%
\func{mod}17,b_{3}+b_{4})$, $(8,19)=(a_{2}+a_{4}\func{mod}17,b_{2}+b_{4})$, $%
(7,22)=(a_{3}+2a_{4}\func{mod}17,b_{3}+2b_{4})$, and $(5,24)=(2a_{3}\func{mod%
}17,2b_{3})$. Thus 
\begin{equation*}
I_{T}=\langle
y^{28},x^{2}y^{26},x^{5}y^{24},x^{7}y^{22},x^{8}y^{19},x^{9}y^{17},x^{10}y^{14},x^{11}y^{12},x^{13}y^{10},x^{15}y^{5},x^{17}\rangle .
\end{equation*}%
Therefore, $\widetilde{I}=I_{S}\cap I_{T}=\langle
y^{28},x^{2}y^{26},x^{5}y^{24},x^{7}y^{22},x^{8}y^{20},x^{10}y^{14},x^{11}y^{12},x^{13}y^{10},x^{15}y^{5},x^{17}\rangle 
$.
\end{example}

The author would like to point out that the algorithm that is provided in
this paper does not apply to arbitrary monomial ideals in $K[x,y]$ as it
will be illustrated at the end of the next section.

\section{\protect\bigskip Decomposition of powers of an ideal}

We start by decomposing sufficiently large powers of the ideal $I$ by means
of the semigroups $S$ and $T$, see Lemma $\left( \ref{MainLemma-One}\right) $
below. In order to do so we need to consider some remarks concerning the
semigroups $S$ and $T$ and the hypothesis $m_{i}\in I\left(
a_{n},b_{0}\right) $, the integral closure of the ideal $\langle
x^{a_{n}},y^{b_{0}}\rangle $ as we define now:

\begin{definition}
Let $I$ be an ideal in a Noetherian ring $R$. The integral closure of $I$ is
the ideal $\overline{I}$ that consists of all elements of $R$ that satisfy
an equation of the form 
\begin{equation*}
x^{n}+a_{1}x^{n-1}+\cdots +a_{n-1}x+a_{n}=0,\ \ \ \ \ a_{i}\in I^{i}.
\end{equation*}%
The ideal $I$ is said to be integrally closed if $I=\overline{I}$.
\end{definition}

\ \ 

It is well known that the integral closure of monomial ideal in a polynomial
ring is again a monomial ideal (See \cite{SH}, Proposition 1.4.2). The
problem of finding the integral closure for a monomial ideal $I$ reduces to
finding monomials $r$, integer $i$ and monomials $m_{1},m_{2},\ldots ,m_{i}$
in $I$ such that $r^{i}=m_{1}m_{2}\cdots m_{i}$, see Section 1.4 in \cite{SH}%
. Geometrically, finding the integral closure of monomial ideals $I$ in $%
R=K[x_{0},\ldots ,x_{n}]$ is the same as finding all the integer lattice
points in the convex hull $NP(I)$\ (the Newton polyhedron of $I$) in $%
\mathbb{R}^{n}$ of $\Gamma (I)$ (the Newton polytope of $I$) where $\Gamma
(I)$ is the set of all exponent vectors of all the monomials in $I$. This
implies $x_{1}^{\gamma _{1}}x_{2}^{\gamma _{2}}\cdots x_{n}^{\gamma _{n}}\in
I(a_{1},a_{2},\ldots ,a_{n})$ if and only if there are non-negative rational
numbers $c_{1},c_{2},\ldots ,c_{n}$ with $\tsum\limits_{i=1}^{n}c_{i}=1$ and 
$\gamma _{i}\geq c_{i}a_{i}$.

\ \ \ \ \ \ \ 

\begin{remark}
\label{I^L} Let $I=\left\langle m_{0},\ldots ,m_{n}\right\rangle $ where $%
m_{i}=x^{a_{i}}y^{b_{i}}\in I\left( a_{n},b_{0}\right) $ for $i=0,\ldots ,n$
with $a_{0}=b_{n}=0$. \ 

(1) $I^{l}=\left\langle x^{\alpha }y^{\beta }:(\alpha ,\beta
)=\tsum\limits_{i=0}^{n}\lambda _{i}\left( a_{i},b_{i}\right) \in \Omega 
\text{ and }\tsum\limits_{i=0}^{n}\lambda _{i}=l\right\rangle $ for all $%
l\in \mathbb{Z}^{+}$.

(2) If $x^{\alpha }y^{\beta }\in I^{l}$, then $\frac{\alpha }{a_{n}}+\frac{%
\beta }{b_{0}}\geq l$. In particular, either $\beta \geq \left( l/2\right)
b_{0}$ or $\alpha \geq \left( l/2\right) a_{n}$.

(3) $\frac{b_{0}-b_{i}}{a_{i}}\leq \frac{b_{0}}{a_{n}}$ for $i=1,\ldots ,n-1$%
.
\end{remark}

\begin{proof}
As $m_{i}=x^{a_{i}}y^{b_{i}}\in I\left( a_{n},b_{0}\right) $, then there
exist $c_{1},c_{2}\in \mathbb{Q}^{\mathbb{+}}$ with $c_{1}+c_{2}=1$ such
that $a_{i}\geq c_{1}a_{n}$ and $b_{i}\geq c_{2}b_{0}$. Hence, $\frac{a_{i}}{%
a_{n}}+\frac{b_{i}}{b_{0}}\geq 1$, this implies $\frac{\alpha ^{\prime }}{%
a_{n}}+\frac{\beta ^{\prime }}{b_{0}}=\tsum\limits_{i=0}^{n}\lambda
_{i}\left( \frac{a_{i}}{a_{n}}+\frac{b_{i}}{b_{0}}\right) \geq
\tsum\limits_{i=0}^{n}\lambda _{i}=l$. Also, $\frac{b_{0}-b_{i}}{a_{i}}\leq 
\frac{b_{0}-b_{i}}{c_{1}a_{n}}\leq \frac{b_{0}-c_{2}b_{0}}{c_{1}a_{n}}=\frac{%
b_{0}}{a_{n}}\frac{1-c_{2}}{c_{1}}=\frac{b_{0}}{a_{n}}$.
\end{proof}

\ \ \ \ \ \ \ \ \ \ \ \ \ \ 

The following remark provides us with the technique that we repeatedly use
in this paper

\begin{remark}
\label{L-1-equality}If $(\alpha ,\beta )=\tsum\limits_{i=0}^{n}\lambda
_{i}\left( a_{i},b_{i}\right) \in \Omega $ with $\tsum\limits_{i=0}^{n}%
\lambda _{i}=l$ and $\alpha =\tsum\limits_{i=0}^{n}\lambda _{i}a_{i}\leq
a_{n}$, then $\beta =\tsum\limits_{i=0}^{n}\lambda _{i}b_{i}=\left(
l-1\right) b_{0}+\beta _{1}$ with $\left( \alpha ,\beta _{1}\right) \in S$.
Also, if $(\alpha ,\beta )\in S$ with $(\alpha ,\beta +kb_{0})\in \Omega $
and $\alpha =\tsum\limits_{i=0}^{n}\lambda _{i}a_{i}$, $\beta
+kb_{0}=\tsum\limits_{i=0}^{n}\lambda _{i}b_{i}$ with $\tsum%
\limits_{i=0}^{n}\lambda _{i}=l$, then $k=l-1$.
\end{remark}

\begin{proof}
Showing $\beta =\tsum\limits_{i=0}^{n}\lambda _{i}b_{i}=\left( l-1\right)
b_{0}+\beta _{1}$ with $\beta _{1}\leq b_{0}$ is equivalent to showing that $%
\tsum\limits_{i=0}^{n}\lambda _{i}\left( b_{0}-b_{i}\right) \leq b_{0}$.
Since $\alpha =\tsum\limits_{i=0}^{n}\lambda _{i}a_{i}\leq a_{n}$, then $%
\tsum\limits_{i=0}^{n}\lambda _{i}\frac{a_{i}}{a_{n}}\leq 1$. Thus by part
(3) of Remark $\left( \ref{I^L}\right) $ we get $\tsum\limits_{i=0}^{n}%
\lambda _{i}\left( b_{0}-b_{i}\right) \leq \tsum\limits_{i=0}^{n}\lambda _{i}%
\frac{a_{i}}{a_{n}}b_{0}\leq b_{0}$.

\ \ 

To prove the other part it is enough to show $\beta +kb_{0}\geq \left(
l-1\right) b_{0}$. Consider $\left( l-1\right) b_{0}-\left( \beta
+kb_{0}\right) =lb_{0}-\tsum\limits_{i=0}^{n}\lambda
_{i}b_{i}-b_{0}=\tsum\limits_{i=0}^{n}\lambda _{i}\left( b_{0}-b_{i}\right)
-b_{0}\leq 0$ since $\tsum\limits_{i=0}^{n}\lambda _{i}\left(
b_{0}-b_{i}\right) \leq b_{0}$ as above.
\end{proof}

\ \ \ 

\begin{notation}
\label{Notations} Let $q_{_{S}},q_{_{T}}\in \mathbb{Z}$ be such that $%
a_{n}=q_{_{S}}a_{1}+e_{_{S}}$ with $0\leq e_{_{S}}<a_{1}$ and $%
b_{n}=q_{_{T}}b_{n-1}+e_{_{T}}$ with $0\leq e_{_{T}}<b_{n-1}$. Note that if $%
(\alpha ,\beta )\in S$ with $\alpha =\tsum\limits_{i=0}^{n}\lambda _{i}a_{i}$%
, then $\tsum\limits_{i=0}^{n}\lambda _{i}\leq q_{_{S}}$. And if \ $(\alpha
,\beta )\in T$ with $\beta =\tsum\limits_{i=0}^{n}\delta _{i}b_{i}$, then $%
\tsum\limits_{i=0}^{n}\delta _{i}\leq q_{_{T}}$. Also, note $%
q_{_{S}},q_{_{T}}\geq 1$.
\end{notation}

\ \ \ \ \ \ \ \ \ \ \ \ \ \ \ \ 

This section is concluded with an explicit decomposition of sufficiently
large powers of the ideal $I$. This decomposition enables us to compute the
Ratliff-Rush closure.

\begin{lemma}
\label{MainLemma-One}Let $I$, $I_{S}$, and $I_{T}$ be as above. Then for
every $l\geq \max \{q_{_{T}},q_{_{S}}\}$%
\begin{equation*}
I^{l}=y^{b_{0}(l-1)}I_{S}+x^{a_{n}(l-1)}I_{T}+x^{a_{n}}y^{b_{0}}M
\end{equation*}%
where $M=I^{l}:(x^{a_{n}}y^{b_{0}})$.
\end{lemma}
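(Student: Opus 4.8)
The plan is to prove the two inclusions separately, working throughout with monomial generators since $I^{l}$, $I_{S}$, $I_{T}$, and $M$ are all monomial ideals. Write $J=y^{b_{0}(l-1)}I_{S}+x^{a_{n}(l-1)}I_{T}+x^{a_{n}}y^{b_{0}}M$ for the right-hand side.

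For the inclusion $J\subseteq I^{l}$, the summand $x^{a_{n}}y^{b_{0}}M$ lies in $I^{l}$ by the very definition of $M=I^{l}:(x^{a_{n}}y^{b_{0}})$. For the summand $y^{b_{0}(l-1)}I_{S}$, a typical generator has the form $x^{\alpha}y^{\beta +b_{0}(l-1)}$ with $(\alpha ,\beta )\in S$; choosing $k$ with $(\alpha ,\beta +kb_{0})\in \Omega$ and writing $(\alpha ,\beta +kb_{0})=\sum_{i}\lambda _{i}(a_{i},b_{i})$ with $\sum_{i}\lambda _{i}=l^{\prime}$, Remark (\ref{L-1-equality}) forces $k=l^{\prime}-1$, so $x^{\alpha}y^{\beta +(l^{\prime}-1)b_{0}}\in I^{l^{\prime}}$. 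Notation (\ref{Notations}) gives $l^{\prime}\leq q_{_{S}}\leq l$, and since $y^{b_{0}}=m_{0}\in I$ we may multiply by $(y^{b_{0}})^{l-l^{\prime}}\in I^{l-l^{\prime}}$ to place $x^{\alpha}y^{\beta +(l-1)b_{0}}$ in $I^{l^{\prime}}\cdot I^{l-l^{\prime}}=I^{l}$. The summand $x^{a_{n}(l-1)}I_{T}$ is handled by the symmetric argument, using the $x\leftrightarrow y$ analogue of Remark (\ref{L-1-equality}) (which holds because $\frac{a_{n}-a_{i}}{b_{i}}\leq \frac{a_{n}}{b_{0}}$ by the same computation as in Remark (\ref{I^L})), together with $q_{_{T}}\leq l$ and $x^{a_{n}}=m_{n}\in I$. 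This is precisely where the hypothesis $l\geq \max \{q_{_{T}},q_{_{S}}\}$ is used.

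For the reverse inclusion $I^{l}\subseteq J$, I would take a monomial $x^{\alpha}y^{\beta}\in I^{l}$ and, by Remark (\ref{I^L})(1), fix a representation $(\alpha ,\beta )=\sum_{i}\lambda _{i}(a_{i},b_{i})$ with $\sum_{i}\lambda _{i}=l$, then split into three (non-exclusive) cases. If $\alpha \geq a_{n}$ and $\beta \geq b_{0}$, then $x^{\alpha}y^{\beta}=x^{a_{n}}y^{b_{0}}\cdot x^{\alpha -a_{n}}y^{\beta -b_{0}}$ with the second factor in $M$, so $x^{\alpha}y^{\beta}\in x^{a_{n}}y^{b_{0}}M$. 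If $\alpha \leq a_{n}$, Remark (\ref{L-1-equality}) gives $\beta =(l-1)b_{0}+\beta _{1}$ with $(\alpha ,\beta _{1})\in S$, whence $x^{\alpha}y^{\beta}=y^{b_{0}(l-1)}\cdot x^{\alpha}y^{\beta _{1}}\in y^{b_{0}(l-1)}I_{S}$. If $\beta \leq b_{0}$, the symmetric statement gives $\alpha =(l-1)a_{n}+\alpha _{1}$ with $(\alpha _{1},\beta )\in T$, so $x^{\alpha}y^{\beta}\in x^{a_{n}(l-1)}I_{T}$.

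The one point requiring care is that these three cases are exhaustive: failure of the last two means $\alpha >a_{n}$ and $\beta >b_{0}$, which already forces $\alpha \geq a_{n}$ and $\beta \geq b_{0}$, i.e. the first case. I expect the principal obstacle to be the bookkeeping in the inclusion $J\subseteq I^{l}$ — correctly identifying the power $l^{\prime}$ attached to a generator of $I_{S}$ (resp. $I_{T}$) via Remark (\ref{L-1-equality}), bounding it by $q_{_{S}}$ (resp. $q_{_{T}}$) through Notation (\ref{Notations}), and then promoting the containment from $I^{l^{\prime}}$ up to $I^{l}$ using that $y^{b_{0}}$ and $x^{a_{n}}$ are themselves in $I$; the constraint $l\geq \max \{q_{_{T}},q_{_{S}}\}$ is exactly what makes this promotion legitimate. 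The reverse inclusion, by contrast, is a direct application of Remark (\ref{L-1-equality}) and its $x\leftrightarrow y$ transpose.
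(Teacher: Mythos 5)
Your proposal is correct and takes essentially the same route as the paper's own proof: both directions rest on Remark (\ref{L-1-equality}) (and its $x\leftrightarrow y$ transpose), the bound $\sum_i\lambda_i\leq q_{_{S}}$ (resp.\ $q_{_{T}}$) from Notation (\ref{Notations}), and padding by powers of $m_0$ and $m_n$, with the colon-ideal term absorbing the case $\alpha\geq a_n$, $\beta\geq b_0$. The only differences are cosmetic: the paper writes each generator of $x^{a_n(l-1)}I_T$ directly as a product of $l$ generators of $I$ (via the exponent $l-(c+1)$ on $m_n$) instead of passing through $I^{l'}$ and multiplying by $(y^{b_0})^{l-l'}$, and it leaves the exhaustiveness of the three cases and the $M$-term containment implicit, which you spell out.
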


\begin{proof}
If $x^{\alpha }y^{\beta }\in I_{T}$, then $\beta
=\tsum\limits_{i=0}^{n}\lambda _{i}b_{i}\leq b_{0}$ and $\alpha
=\tsum\limits_{i=0}^{n}\lambda _{i}a_{i}-ca_{n}$ for some positive integer $%
c $ with $q_{_{T}}\geq \tsum\limits_{i=0}^{n}\lambda _{i}>c$. Now if $l\geq
q_{_{T}}$, then $x^{a_{n}(l-1)}x^{\alpha }y^{\beta }=\left( x^{a_{n}}\right)
^{l-(c+1)}\tprod\limits_{i=0}^{n}\left( x^{a_{i}}y^{b_{i}}\right) ^{\lambda
_{i}}\in I^{l}$ as $l-(c+1)+\tsum\limits_{i=0}^{n}\lambda _{i}\geq l$.
Similarly, if $l\geq q_{_{S}}$, then $y^{b_{0}(l-1)}x^{\alpha }y^{\beta }\in
I^{l}$ for every $x^{\alpha }y^{\beta }\in I_{S}$.

\ \ \ \ \ \ 

For the other inclusion it is enough to show that if $x^{\alpha }y^{\gamma
}\in I^{l}$ with $\alpha \leq a_{n}$, then $x^{\alpha }y^{\gamma }\in
y^{b_{0}(l-1)}I_{S}$. But this is done by part (1) of Remark $\left( \ref%
{I^L}\right) $ and Remark $\left( \ref{L-1-equality}\right) $ as $\alpha
=\tsum\limits_{i=0}^{n}\lambda _{i}a_{i}\leq a_{n}$ and $\gamma
=\tsum\limits_{i=0}^{n}\lambda _{i}b_{i}=(l-1)b_{0}+\beta $ with $(\alpha
,\beta )\in I_{S}$.
\end{proof}

\ \ \ 

Considering the ideal $I=\left\langle
x^{7},x^{6}y,xy^{10},y^{14}\right\rangle $, the reader can easily see that
any power of $I$ does not satisfy the above decomposition which is a
cornerstone of the main result of this paper. This causes the algorithm not
to be applicable for arbitrary monomial ideals.

\section{Powers of an ideal and the Ratliff-Rush closure}

In the lemma below we show that the generators of a sufficiently large power
of $I$ take a patterns that involve powers of $m_{0}$ and $m_{n}$. This is a
consequences of the hypothesis on the generators of $I$, that is, $%
m_{i}=x^{a_{i}}y^{b_{i}}\in I\left( a_{n},b_{0}\right) $.

\begin{lemma}
\label{MainLemma-Two}Let $I=\left\langle m_{n},\ldots ,m_{0}\right\rangle $
where $m_{i}=x^{a_{i}}y^{b_{i}}\in I\left( a_{n},b_{0}\right) $ for $%
i=0,\ldots ,n$ and $a_{0}=b_{n}=0$. Let $r$ be any positive integer. Then
there exist a positive integer $L$ such that if $l\geq L$, then the
generators of $I^{l}$ are of the forms $m_{0}^{\gamma }\xi _{0,\gamma
}m_{n}^{l-\gamma -1}$ and $m_{0}^{l-\gamma -1}\xi _{\gamma ,0}m_{n}^{\gamma
} $ for every $\gamma $ with $r\leq \gamma \leq l-1$ where $\xi _{0,\gamma }$
and $\xi _{\gamma ,0}$ are some monomials.
\end{lemma}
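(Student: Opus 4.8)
The plan is to induct on $l$, using the decomposition of Lemma $\left(\ref{MainLemma-One}\right)$ as the engine. Fix $r$, and note that by the symmetry $x\leftrightarrow y$, $m_0\leftrightarrow m_n$, $a_n\leftrightarrow b_0$ it suffices to produce the generators of the first shape $m_0^{\gamma}\xi_{0,\gamma}m_n^{l-\gamma-1}$; the second family follows by the identical argument with the roles of $S$ and $T$ (hence of $I_S$ and $I_T$) interchanged. By part (1) of Remark $\left(\ref{I^L}\right)$ every generator of $I^l$ is a monomial $\tprod_{i=0}^{n}m_i^{\lambda_i}$ with $\tsum_i\lambda_i=l$, so the content of the claim is that, once $l$ is large, the number of copies of $m_0$ (resp. $m_n$) that can be displayed in such a generator sweeps through every value of $\gamma$ in $[r,l-1]$, with the complementary factor always collapsing to a single monomial $\xi_{0,\gamma}$ times the correct power $m_n^{l-\gamma-1}$.

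First I would read off the two extreme cases directly from Lemma $\left(\ref{MainLemma-One}\right)$. The summand $y^{b_0(l-1)}I_S=m_0^{l-1}I_S$ contributes generators $m_0^{l-1}x^{\alpha}y^{\beta}$ with $(\alpha,\beta)\in S$; since $(\alpha,\beta)\in S$ forces $\alpha\le a_n$ and, by Notation $\left(\ref{Notations}\right)$, bounds the number of factors used, these are exactly the generators with $\gamma=l-1$ and $l-\gamma-1=0$, and symmetrically $x^{a_n(l-1)}I_T=m_n^{l-1}I_T$ accounts for the opposite corner. The intermediate values of $\gamma$ must come from the interior term $x^{a_n}y^{b_0}M=m_0m_nM$ with $M=I^l:(m_0m_n)$: since $m_0m_nI^{l-2}\subseteq I^l$ we have $I^{l-2}\subseteq M$, so every generator of $I^{l-2}$ contributes, after multiplication by $m_0m_n$, a generator of $I^l$ whose displayed corner exponents are each shifted by one. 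This is the step that feeds the inductive hypothesis: a generator of $I^{l-2}$ of shape $m_0^{\gamma'}\xi m_n^{(l-2)-\gamma'-1}$ becomes $m_0^{\gamma'+1}\xi m_n^{l-(\gamma'+1)-1}$, i.e.\ the shape at level $l$ with $\gamma=\gamma'+1$.

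The key local computation is to guarantee that the cofactor really has the clean form $\xi_{0,\gamma}m_n^{l-\gamma-1}$, that is, that after extracting $\gamma$ copies of $m_0$ the remaining $x$-exponent is exactly $(l-\gamma-1)a_n$ plus the exponent of a single monomial $\xi_{0,\gamma}$. This is what Remark $\left(\ref{L-1-equality}\right)$ delivers: for a generator with $\alpha=\tsum\lambda_ia_i\le a_n$ it pins the $y$-exponent as $(l-1)b_0+\beta_1$ with $(\alpha,\beta_1)\in S$, and the inequality $\frac{b_0-b_i}{a_i}\le\frac{b_0}{a_n}$ from part (3) of Remark $\left(\ref{I^L}\right)$ is exactly what keeps the bookkeeping of $m_0$-powers against $m_n$-powers consistent across the shift. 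Running this together with the decomposition identifies, at each level, which generators carry which value of $\gamma$.

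Finally I would fix $L$ so that the entire descent stays inside the regime where Lemma $\left(\ref{MainLemma-One}\right)$ applies, i.e.\ so that every power appearing while peeling off factors $m_0m_n$ down to $\gamma=r$ remains at least $\max\{q_{_{S}},q_{_{T}}\}$; taking $L$ of the order $r+\max\{q_{_{S}},q_{_{T}}\}$ should suffice, and the base cases of the induction absorb the finitely many generators with $\gamma$ below $r$, which are precisely the ones the hypothesis $l\ge L$ is designed to exclude from the stated range. The hard part will be verifying the \emph{self-similarity} of the decomposition under the colon operation: one must check that $M=I^l:(m_0m_n)$ genuinely reproduces the generator structure of the lower power $I^{l-2}$ rather than acquiring spurious generators near the two corners, and that no interior generator of $I^l$ escapes the inductive description, so that the shift $\gamma\mapsto\gamma-1$ is a bijection on the relevant generators. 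This compatibility is where the integral-closure hypothesis $m_i\in I(a_n,b_0)$, through parts (2) and (3) of Remark $\left(\ref{I^L}\right)$, does the real work.
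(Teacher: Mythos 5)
Your induction does not close, and the gap is exactly the step you label ``the hard part'' and then leave unproven: that $M=I^{l}:(x^{a_{n}}y^{b_{0}})$ reproduces the generator structure of $I^{l-2}$. Lemma \ref{MainLemma-One} only gives the inclusion $I^{l-2}\subseteq M$, with no control in the other direction. By definition, $x^{a_{n}}y^{b_{0}}M$ is precisely the part of $I^{l}$ divisible by $x^{a_{n}}y^{b_{0}}$, so the generators of $M$ are obtained by stripping $x^{a_{n}}y^{b_{0}}$ off the generators of $I^{l}$ lying in that summand --- describing them presupposes knowing the generators of $I^{l}$, which is the very thing to be proven; the argument is circular. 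Your inductive hypothesis applies to generators of $I^{l-2}$, not to generators of $M$, and nothing you say rules out generators of $M$ outside $I^{l-2}$ (an equality of the form $I^{l}:(m_{0}m_{n})=I^{l-2}$ is a depth-type condition on the associated graded ring and cannot be had for free in a paper whose outputs include ideals with $\widetilde{I}\neq I$). There are secondary problems as well: the descent $l\mapsto l-2$ needs a base case in which the lemma is established outright --- and the statement is no easier for small $l$ --- and the shift $\gamma'\mapsto\gamma'+1$ by itself does not deliver the requirement that $\gamma$ sweeps \emph{every} value in $[r,l-1]$.

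The gap is avoidable because the lemma needs none of this machinery; the paper's proof is a short direct computation that never invokes Lemma \ref{MainLemma-One}. For any monomial $x^{\alpha'}y^{\beta'}\in I^{l}$, part (2) of Remark \ref{I^L} (the one-line consequence of $m_{i}\in I(a_{n},b_{0})$) gives $\frac{\alpha'}{a_{n}}+\frac{\beta'}{b_{0}}\geq l$, so with $L=2(r+1)$ either $\beta'\geq(r+1)b_{0}$ or $\alpha'\geq(r+1)a_{n}$. In the first case write $\beta'=\gamma b_{0}+\beta$ with $0\leq\beta<b_{0}$; then $\gamma>r$, and since $\frac{\beta'}{b_{0}}<\gamma+1$ the same inequality forces $\alpha'\geq(l-\gamma-1)a_{n}$, whence $x^{\alpha'}y^{\beta'}=m_{0}^{\gamma}\left(x^{\alpha}y^{\beta}\right)m_{n}^{l-\gamma-1}$ with $\alpha=\alpha'-(l-\gamma-1)a_{n}$; the other case is symmetric. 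That every $\gamma\in[r,l-1]$ actually occurs comes from the explicit monomials $y^{\gamma b_{0}}x^{(l-\gamma)a_{n}}\in\left\langle x^{a_{n}},y^{b_{0}}\right\rangle^{l}\subseteq I^{l}$. Your instinct that the integral-closure hypothesis ``does the real work'' is correct, but it enters through this degree inequality applied directly to each generator of $I^{l}$, not through any self-similarity of the colon decomposition.
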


\begin{proof}
Let $r$ be a positive integer and $q=\max \{q_{_{T}},q_{_{S}}\}$. Choose $%
L=2\left( r+1\right) $ and let $\omega =x^{\alpha ^{\prime }}y^{\beta
^{\prime }}\in I^{l}$ for some $l$ $\geq L$. By part (1) of Remark $\left( %
\ref{I^L}\right) $ we may write $(\alpha ^{\prime },\beta ^{\prime
})=\tsum\limits_{i=0}^{n}\lambda _{i}\left( a_{i},b_{i}\right) \in \Omega $
with $\tsum\limits_{i=0}^{n}\lambda _{i}=l$. By part (2) of Remark $\left( %
\ref{I^L}\right) $ either $\beta ^{\prime }\geq \left( r+1\right) b_{0}$ or $%
\alpha ^{\prime }\geq \left( r+1\right) a_{n}$. We make the proof whenever $%
\beta ^{\prime }\geq \left( r+1\right) b_{0}$ where we show $\omega
=m_{0}^{\gamma }\xi _{0,\gamma }m_{n}^{l-\gamma -1}$ for some monomial $\xi
_{0,\gamma }$. The proof is similar for the case $\alpha ^{\prime }\geq
\left( r+1\right) a_{n}$ where it can be shown that $\omega =m_{0}^{l-\gamma
-1}\xi _{\gamma ,0}m_{n}^{\gamma }$.

\ \ \ \ \ 

Let $\beta ^{\prime }\geq \left( r+1\right) b_{0}$ and write $\beta ^{\prime
}=\gamma b_{0}+\beta $ with $0\leq \beta <b_{0}$. Note $r<\gamma <l$ as $%
\left( r+1\right) b_{0}\leq \beta ^{\prime }\leq lb_{0}$. Since $\frac{\beta
^{\prime }}{b_{0}}<\gamma +1$, then by part (2) of Remark $\left( \ref{I^L}%
\right) $ we must have $\alpha ^{\prime }\geq (l-\gamma -1)a_{n}$. Write $%
\alpha ^{\prime }=(l-\gamma -1)a_{n}+\alpha $. Now $\omega =x^{\alpha
^{\prime }}y^{\beta ^{\prime }}=(y^{b_{0}})^{\gamma }x^{\alpha }y^{\beta
}\left( x^{a_{n}}\right) ^{l-\gamma -1}=m_{0}^{\gamma }x^{\alpha }y^{\beta
}m_{n}^{l-\gamma -1}$.

\ \ \ \ 

Finally, note $\left\langle x^{a_{n}},y^{b_{0}}\right\rangle \subseteq I^{l}$%
, hence $\left\langle y^{lb_{0}},y^{(l-1)b_{0}}x^{a_{n}},\ldots
,y^{(r+1)b_{0}}x^{l-(r+1)a_{n}},\ldots ,x^{a_{n}}\right\rangle \subseteq
I^{l}$ which suffices to show that $\gamma $ takes all integer values
between $r$ and $l-1$, which finishes the proof.
\end{proof}

\begin{remark}
\label{m-0^q}If $r=2q$ and $l\geq 4q+2$ as in the above lemma and $\omega
\in I^{l}$, then $\omega \in (m_{0}^{q})I^{l-q}$ or $\omega \in
I^{l-q}(m_{n}^{q})$.
\end{remark}

\begin{proof}
Assume $\omega =m_{0}^{\gamma }\xi _{0,\gamma }m_{n}^{l-\gamma -1}\in I^{l}$
with $r\leq \gamma \leq l-1$. Applying the above lemma with $r^{\prime }=q-1$
and $l^{\prime }\geq 2q$, then the generators of $I^{l^{\prime }}$ are of
the forms $m_{0}^{\gamma ^{\prime }}\xi _{0,\gamma ^{\prime
}}m_{n}^{l-\gamma ^{\prime }-1}$ and $m_{0}^{l-\gamma ^{\prime }-1}\xi
_{\gamma ^{\prime },0}m_{n}^{\gamma ^{\prime }}$ for every $\gamma ^{\prime
} $ with $q-1\leq \gamma ^{\prime }\leq l-1$ where $\xi _{0,\gamma ^{\prime
}}$ and $\xi _{\gamma ^{\prime },0}$ are some monomials. As\ $2q\leq \gamma $
and $l\geq 4q+2$, then $r^{\prime }\leq \gamma -q-1\leq l-q-2$ and $%
l-q-2\geq 2q$. Thus setting $\gamma ^{\prime }=\gamma -q-1$ and $l^{\prime
}=l-q-2$. Therefore, $m_{0}^{\gamma -q-1}\xi _{0,\gamma ^{\prime
}}m_{n}^{l-\gamma -2}=m_{0}^{\gamma ^{\prime }}\xi _{0,\gamma ^{\prime
}}m_{n}^{l-\gamma ^{\prime }-1}\in I^{l-q}$. Note $m_{0}\xi _{0,\gamma
}m_{n} $ is a multiple of $\xi _{0,\gamma ^{\prime }}$, say $m_{0}\xi
_{0,\gamma }m_{n}=\rho \xi _{0,\gamma ^{\prime }}$. Thus $\omega
=m_{0}^{\gamma }\xi _{0,\gamma }m_{n}^{l-\gamma -1}=$ $m_{0}^{\gamma
-1}\left( m_{0}\xi _{0,\gamma }m_{n}\right) m_{n}^{l-\gamma -2}=$ $%
m_{0}^{q}\rho (m_{0}^{\gamma -q-1}\xi _{0,\gamma ^{\prime }}m_{n}^{l-\gamma
-2})\in (m_{0}^{q})I^{l-q}$.

\ \ \ \ 

Assume $\omega =m_{0}^{l-\gamma -1}\xi _{\gamma ,0}m_{n}^{\gamma }$. Then a
similar process shows that $\omega \in I^{l-q}(m_{n}^{q})$, which finishes
the proof.
\end{proof}

\ \ \ \ 

Now we are ready to prove the first main theorem of the paper.

\begin{theorem}
\label{RR-Closure}Let the ideals $I$, $I_{S}$, and $I_{T}$ be as before.
Then $\widetilde{I}=I_{S}\cap I_{T}$.
\end{theorem}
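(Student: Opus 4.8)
The plan is to prove the two inclusions $\widetilde{I}\subseteq I_S\cap I_T$ and $I_S\cap I_T\subseteq\widetilde{I}$ separately, using the explicit decomposition of large powers established in Lemma $(\ref{MainLemma-One})$ together with the structural pattern from Lemma $(\ref{MainLemma-Two})$. Recall that $\widetilde{I}=I^{l+1}:I^l$ for all sufficiently large $l$, so the whole argument can be run for a single fixed $l$ chosen large enough (say $l\geq 4q+2$ with $q=\max\{q_{_S},q_{_T}\}$, so that both lemmas and Remark $(\ref{m-0^q})$ apply). The first reduction I would make is to note that all ideals in sight are monomial, so it suffices to test membership on monomials $x^\alpha y^\beta$.

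For the inclusion $\widetilde{I}\subseteq I_S\cap I_T$, I would start from $x^\alpha y^\beta\in I^{l+1}:I^l$, i.e.\ $x^\alpha y^\beta\cdot I^l\subseteq I^{l+1}$. The idea is to feed this a well-chosen generator of $I^l$. Multiplying by $m_n^l=x^{a_n l}$ (a generator of $I^l$) forces $x^{\alpha+a_n l}y^\beta\in I^{l+1}$; by Lemma $(\ref{MainLemma-Two})$ this product is divisible in a controlled way by powers of $m_n=x^{a_n}$, and peeling off the $x^{a_n}$-factors should leave a relation exhibiting $(\alpha',\beta)\in\Omega$ with $\alpha'\leq a_n$, which by the definition of $T$ (via Remark $(\ref{L-1-equality})$) places $x^\alpha y^\beta\in I_T$. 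A symmetric computation multiplying by $m_0^l=y^{b_0 l}$ and using the $S$-side of Lemma $(\ref{MainLemma-Two})$ gives $x^\alpha y^\beta\in I_S$. Intersecting yields the first inclusion.

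For the reverse inclusion $I_S\cap I_T\subseteq\widetilde{I}$, I would take a monomial $x^\alpha y^\beta\in I_S\cap I_T$ and show $x^\alpha y^\beta\cdot I^l\subseteq I^{l+1}$ for large $l$. Using Lemma $(\ref{MainLemma-One})$, write $I^l=y^{b_0(l-1)}I_S+x^{a_n(l-1)}I_T+x^{a_n}y^{b_0}M$, so it suffices to multiply $x^\alpha y^\beta$ into each of the three summands and land in $I^{l+1}$. The two boundary summands should be handled using that $x^\alpha y^\beta$ lies in $I_T$ (resp.\ $I_S$): combining an $S$-generator with $y^{b_0(l-1)}$ and the $T$-membership of $x^\alpha y^\beta$ recovers enough semigroup elements summing to exponent-count $l+1$, again via Remark $(\ref{L-1-equality})$ and part (1) of Remark $(\ref{I^L})$. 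The colon-ideal summand $x^{a_n}y^{b_0}M$ is essentially automatic, since $x^{a_n}y^{b_0}\in I$ already.

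I expect the main obstacle to be the reverse inclusion, specifically controlling the middle/boundary terms of the Lemma $(\ref{MainLemma-One})$ decomposition: one must verify that multiplying an element known only to lie in $I_S\cap I_T$ (not in $I$ itself) by the mixed generators $y^{b_0(l-1)}x^{\alpha'}y^{\beta'}$ genuinely produces an element of $I^{l+1}$, which requires carefully tracking the $\sum\lambda_i=l+1$ bookkeeping and invoking the bound $\frac{b_0-b_i}{a_i}\leq\frac{b_0}{a_n}$ from Remark $(\ref{I^L})(3)$ exactly as in the proof of Remark $(\ref{L-1-equality})$. The intersection condition $x^\alpha y^\beta\in I_S\cap I_T$ (rather than membership in just one of them) is what should be needed to cover both the $y$-dominant and $x$-dominant generators of $I^l$ simultaneously, and getting that interplay right is the crux.
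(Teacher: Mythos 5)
Your inclusion $\widetilde{I}\subseteq I_{S}\cap I_{T}$ is, modulo phrasing, the paper's own argument: the paper runs it contrapositively, using Lemma (\ref{MainLemma-One}) to decompose $I^{l+1}$ and a degree count (if $\delta \notin I_{S}$ then $\deg _{x}\delta <a_{n}$, since $I\subseteq I_{S}$) to conclude $\delta m_{0}^{l}=\delta y^{lb_{0}}\notin I^{l+1}$ for every $l$, and symmetrically for $I_{T}$. Your version (multiply by $m_{0}^{l}$, resp.\ $m_{n}^{l}$, and divide the monomial back out) is the same computation; the only correction is that the tool for this direction is Lemma (\ref{MainLemma-One}), not Lemma (\ref{MainLemma-Two}).

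The genuine gap is in $I_{S}\cap I_{T}\subseteq \widetilde{I}$, at exactly the summand you declare ``essentially automatic''; ironically, the boundary summands you single out as the crux are the routine ones. With the correct pairing (use $\delta \in I_{S}$ against $y^{b_{0}(l-1)}I_{S}$ and $\delta \in I_{T}$ against $x^{a_{n}(l-1)}I_{T}$ --- you have the pairing crossed: the factor $y^{b_{0}(l-1)}$ supplies the copies of $y^{b_{0}}$ that Remark (\ref{L-1-equality}) needs, so it must be matched with the $S$-membership of $\delta $), the boundary terms follow by the bookkeeping you describe. But the third summand $x^{a_{n}}y^{b_{0}}M$ with $M=I^{l}:(x^{a_{n}}y^{b_{0}})$ is not a small correction term: it equals $I^{l}\cap \langle x^{a_{n}}y^{b_{0}}\rangle $, hence absorbs every ``interior'' generator of $I^{l}$, e.g.\ $m_{1}^{l}$ or any product of $l$ generators avoiding $m_{0}$ and $m_{n}$ --- the bulk of $I^{l}$. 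For $\mu \in M$ you must show $\delta x^{a_{n}}y^{b_{0}}\mu \in I^{l+1}$, and the fact that $x^{a_{n}}y^{b_{0}}\in I^{2}$ gives nothing: factoring the product as $I^{2}\cdot (\delta \mu )$ requires $\delta \mu \in I^{l-1}$, but $M\not\subseteq I^{l-2}$ in general, and even when $\mu \in I^{l-2}$ the needed statement $\delta I^{l-2}\subseteq I^{l-1}$ is precisely the theorem being proved; using $x^{a_{n}}y^{b_{0}}\mu \in I^{l}$ instead only returns $\delta x^{a_{n}}y^{b_{0}}\mu \in \delta I^{l}$. Both routes are circular. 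This is why the paper does not use Lemma (\ref{MainLemma-One}) in this direction at all: it instead proves (i) via Lemma (\ref{MainLemma-Two}) and Remark (\ref{m-0^q}) that for $l\geq 4q+2$ every generator of $I^{l}$, interior ones included, lies in $(m_{0}^{q})I^{l-q}$ or in $I^{l-q}(m_{n}^{q})$, and (ii) the explicit claim that $\delta m_{0}^{q}\in I^{q+1}$ (from $\delta \in I_{S}$ and Remark (\ref{L-1-equality})) and $\delta m_{n}^{q}\in I^{q+1}$ (from $\delta \in I_{T}$); combining these gives $\delta \omega \in (\delta m_{0}^{q})I^{l-q}\subseteq I^{q+1}I^{l-q}=I^{l+1}$, and symmetrically on the other side. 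Your plan invokes Lemma (\ref{MainLemma-Two}) only in the direction where it is not needed and never engages with these two ingredients, so the heart of the proof is missing.
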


\begin{proof}
Let $\delta \in I_{S}\cap I_{T}$ and $q=\max \{q_{_{S}},q_{_{T}}\}$. Claim $%
\delta m_{0}^{q},\delta m_{n}^{q}\in I^{q+1}$: as $\delta \in I_{S}$, then $%
\delta =$ $x^{r}y^{s}$ with $\left( r-u,s-v\right) \in S$ for some positive
integers $u$ and $v$, that is, $r-u=\tsum\limits_{i=0}^{n}\lambda
_{i}a_{i}\leq a_{n}$ and $s-v\leq b_{0}$ with $(r-u,s-v+kb_{0})\in \Omega $
and $s-v+kb_{0}=\tsum\limits_{i=0}^{n}\lambda _{i}b_{i}$. Let $%
t=\tsum\limits_{i=0}^{n}\lambda _{i}$. By Notations $\left( \ref{Notations}%
\right) $ we have $t\leq q$, and by Remark $\left( \ref{L-1-equality}\right) 
$\ we may rewrite $\tsum\limits_{i=0}^{n}\lambda _{i}b_{i}=(t-1)b_{0}+\ s-v$%
. Then 
\begin{eqnarray*}
\delta m_{0}^{q} &=&x^{r}y^{s}\left( y^{b_{0}}\right)
^{q}=x^{u}y^{v}x^{r-u}y^{s-v}\left( y^{b_{0}}\right) ^{t-1}\left(
y^{b_{0}}\right) ^{q-(t-1)} \\
&=&x^{u}y^{v}x^{\left( \tsum\limits_{i=0}^{n}\lambda _{i}a_{i}\right)
}y^{\left( \tsum\limits_{i=0}^{n}\lambda _{i}b_{i}\right) }\left(
y^{b_{0}}\right) ^{q-(t-1)} \\
&=&x^{u}y^{v}\tprod\limits_{i=0}^{n}\left( x^{a_{i}}y^{b_{i}}\right)
^{\lambda _{i}}\left( y^{b_{0}}\right) ^{q-(t-1)}\in I^{q+1}
\end{eqnarray*}%
Similarly, as $\delta \in I_{T}$, then by a similar procedure as above it
can be shown that $\delta m_{n}^{q}\in I^{q+1}$.

\ \ \ \ \ \ \ \ 

Now choose $r=2q$, then by Lemma $\left( \ref{MainLemma-Two}\right) $ if $%
l\geq 2(2q+1)$, then any generator $\omega $ of $I^{l}$ is either of the
form $m_{0}^{\gamma }\xi _{0,\gamma }m_{n}^{l-\gamma -1}$ or $%
m_{0}^{l-\gamma -1}\xi _{\gamma ,0}m_{n}^{\gamma }$ for every $\gamma $ with 
$r\leq \gamma \leq l-1$ where $\xi _{0,\gamma }$ and $\xi _{\gamma ,0}$ are
some monomials.

\ \ \ \ 

Assume $\omega =m_{0}^{\gamma }\xi _{0,\gamma }m_{n}^{l-\gamma -1}$. By
Remark (\ref{m-0^q}) we have $\omega \in (m_{0}^{q})I^{l-q}$. Therefore, by
the claim above $\delta \omega \in (\delta m_{0}^{q})I^{l-q}\in
I^{q+1}I^{l-q}=I^{l+1}$.

\ \ 

Assume $\omega =m_{0}^{l-\gamma -1}\xi _{\gamma ,0}m_{n}^{\gamma }$. By
Remark (\ref{m-0^q}) we have $\omega \in I^{l-q}(m_{n}^{q})$. Therefore, by
the claim above $\delta \omega \in I^{l-q}(\delta m_{n}^{q})\in
I^{l-q}I^{q+1}=I^{l+1}$.

\ \ \ \ 

On the other hand, assume $\delta \notin I_{S}$ and let $l$ be any positive
integer. Then $\delta y^{lb_{0}}\notin y^{lb_{0}}I_{S}$, also $\delta
y^{lb_{0}}\notin x^{la_{n}}I_{T}$ and $\delta y^{lb_{0}}\notin \left(
x^{a_{n}}y^{b_{0}}\right) M$ because of the $y$-degree count where $%
M=I^{l}:(x^{a_{n}}y^{b_{0}})$. \ Hence, $\delta y^{lb_{0}}\notin I^{l+1}$ by
Lemma $\left( \ref{MainLemma-One}\right) $. Analogously, if $\delta \notin
I_{T}$, then $\delta x^{la_{n}}\notin I^{l+1}$, which finishes the proof.
\end{proof}

\begin{remark}
The Ratliff-Rush reduction number of an ideal $I$ is defined $r(I)=\min
\{l\in \mathbb{Z}_{\geq 0}\mid \widetilde{I}=(I^{l+1}:I^{l})\}$. From the
proof of Theorem $\left( \ref{RR-Closure}\right) $ it is clear that $2q$ is
an upper bound for the Ratliff-Rush reduction number of the ideal $I$.
\end{remark}

\section{Consequences and Examples}

Heinzer et al. \cite{HJLS}, Example (6.3), conjectured that for any integer $%
d$ the ideal $I_{d}=\langle x^{d},x^{d-1}y,y^{d}\rangle $ and all its powers
are Ratliff-Rush. This conjectured was proved later by \cite{RS},
Proposition (1.9), by actual computations of the depth of $gr_{_{I_{d}}}$,
the associated graded ring of $I_{d}$. Later \cite{Cri}, Example (4.2),
proved this conjecture by a method that we generalize in the paper. In
Corollary (\ref{c-m-d}) below we give a generalization of this conjecture.

\begin{remark}
\label{a_i>a/2}Let $I=\langle y^{b_{0}},x^{a_{1}}y^{b_{1}},\ldots
,x^{a_{n-1}}y^{b_{n-1}},x^{a_{n}}\rangle $ with $m_{i}\in I\left(
a_{n},b_{0}\right) $. Then $I$ is Ratliff-Rush if any of the following holds.

\ \ 

(1) $a_{i}\geq a_{n}/2$ for all $i$ or $b_{i}\geq b_{0}/2$ for all $i$.

(2) For all $i$ and $j$ either $a_{i}+a_{j}\geq a_{n}$ or $a_{i}+a_{j}=a_{k}$
and $\left( b_{i}+b_{j}\right) \func{mod}b_{0}\geq b_{k}$ for some $k$.
\end{remark}

Powers of a Ratliff-Rush ideal need not be Ratliff-Rush as Example $\left(
6.1\right) $ of [HJLS] shows. As the powers of an ideal are Ratliff-Rush
implies that the associated graded ring, $gr_{I}(R)=\oplus _{n\geq
0}I^{n}/I^{n+1}$, has a positive depth, we will investigate the Ratliff-Rush
closedness for all powers of ideals in the remaining of the paper.

\begin{corollary}
\label{c-m-d}Let $I=\langle x^{c},m,y^{d}\rangle $ where $m=0$ or $%
m=x^{u}y^{v}$ $\in I\left( c,d\right) $. Then all powers of $I$ are
Ratliff-Rush.
\end{corollary}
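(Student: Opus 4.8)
The plan is to apply Theorem (\ref{RR-Closure}) together with the characterization in Remark (\ref{a_i>a/2}) to the ideal $I=\langle x^{c},m,y^{d}\rangle$, and crucially to show that the Ratliff-Rush property is inherited by all powers $I^{k}$. First I would dispose of the degenerate case $m=0$: then $I=\langle x^{c},y^{d}\rangle$ is a complete intersection, and $I^{k}=\langle x^{c},y^{d}\rangle^{k}$, each generated by monomials $x^{(k-j)c}y^{jd}$; one checks directly (or via part (1) of Remark (\ref{a_i>a/2}), since every generator satisfies $a_{i}\geq a_{n}/2$ or $b_{i}\geq b_{0}/2$ in the relevant range) that every such power is Ratliff-Rush. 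So the substance is the case $m=x^{u}y^{v}\in I(c,d)$.

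Next I would verify that $I$ itself is Ratliff-Rush by checking the hypothesis of Remark (\ref{a_i>a/2}). Here the generators correspond to the three lattice points $(0,d)$, $(u,v)$, and $(c,0)$, so $a_{n}=c$ and $b_{0}=d$. Since there is only one intermediate generator $m$, condition (2) of Remark (\ref{a_i>a/2}) is easy to arrange: for the pair $(i,j)$ coming from $m$ with itself I must show either $2u\geq c$ or $2u=a_{k}$ with $(2v \bmod d)\geq b_{k}$ for some $k$, and the mixed pairs involving $x^{c}$ or $y^{d}$ give $a_{i}+a_{j}\geq c$ automatically. The point $(u,v)\in I(c,d)$ forces $u/c+v/d\geq 1$, and I expect this inequality, pushed through the semigroup bookkeeping, to yield precisely the dichotomy required by condition (2). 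This establishes that $I=\widetilde{I}$.

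The main obstacle — and the real content of the corollary — is passing from $I$ to all its powers $I^{k}$. The key observation I would exploit is that each power retains the structural form required by the theorem: $I^{k}$ is an $\langle x,y\rangle$-primary monomial ideal whose generators are the monomials $x^{a}y^{b}$ with $(a,b)=\sum\lambda_{i}(a_{i},b_{i})$, $\sum\lambda_{i}=k$ (part (1) of Remark (\ref{I^L})), and every such generator again lies in $I(kc,kd)=\overline{\langle x^{kc},y^{kd}\rangle}$ because integral closure is compatible with the Newton-polyhedron condition: if $u/c+v/d\geq1$ then $a/(kc)+b/(kd)\geq1$ for each generator of $I^{k}$. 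Thus $I^{k}$ satisfies the standing hypothesis $m_{i}\in I(a_{n},b_{0})$ of the whole paper, so Theorem (\ref{RR-Closure}) and Remark (\ref{a_i>a/2}) apply to $I^{k}$ in their own right.

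Finally I would verify that $I^{k}$ satisfies condition (2) (or the simpler (1)) of Remark (\ref{a_i>a/2}). The generators of $I^{k}$ are indexed by triples $(\lambda_{0},\lambda_{1},\lambda_{2})$ with $\lambda_{0}+\lambda_{1}+\lambda_{2}=k$, giving lattice points $(\lambda_{1}u+\lambda_{2}c,\ \lambda_{0}d+\lambda_{1}v)$. I expect that for any two such generators, the sum of their first coordinates either already meets or exceeds $a_{n}=kc$, or else equals the first coordinate of a third generator while the $y$-coordinate comparison (mod $kd$) goes the right way — this is the step where the arithmetic of the single relation $u/c+v/d\geq1$ must be tracked carefully across sums of three-term partitions, and it is where I anticipate the bulk of the casework. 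Once condition (2) is confirmed for $I^{k}$ for every $k$, Remark (\ref{a_i>a/2}) gives $\widetilde{I^{k}}=I^{k}$, completing the proof that all powers of $I$ are Ratliff-Rush. $\rule{0.5em}{0.5em}$
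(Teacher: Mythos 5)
Your plan breaks down at exactly the step you defer to ``the bulk of the casework'': conditions (1) and (2) of Remark (\ref{a_i>a/2}) are only \emph{sufficient} conditions, and the powers $I^{k}$ covered by this corollary need not satisfy either of them, so no amount of tracking the relation $u/c+v/d\geq 1$ can establish them. Concretely, take $I=\langle x^{7},xy^{6},y^{7}\rangle$, which satisfies the hypothesis since $\frac{1}{7}+\frac{6}{7}\geq 1$. Then $I^{2}=\langle y^{14},xy^{13},x^{2}y^{12},x^{7}y^{7},x^{8}y^{6},x^{14}\rangle$, with exponent vectors $(0,14),(1,13),(2,12),(7,7),(8,6),(14,0)$ and $a_{n}=b_{0}=14$. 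Condition (1) fails: $(1,13)$ has first coordinate $1<7$ while $(8,6)$ has second coordinate $6<7$. Condition (2) fails for the pair $(1,13)$, $(2,12)$: the sum of first coordinates is $3<14$, and no generator of $I^{2}$ has first coordinate $3$. Yet $I^{2}$ \emph{is} Ratliff-Rush: every sum of two or more of these generators with total $y$-degree at most $14$ reduces, after subtracting the appropriate multiple of $14$ from the $x$-coordinate, to one of the six exponent vectors above, so $(I^{2})_{T}=I^{2}$ and Theorem (\ref{RR-Closure}) gives $\widetilde{I^{2}}=(I^{2})_{S}\cap (I^{2})_{T}=I^{2}$. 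The same example already defeats your condition-(2) verification for $I$ itself: the pair $(m,m)$ would require $2u\geq c$ or $2u\in \{0,u,c\}$, which fails for $u=1$, $c=7$. The paper proves $\widetilde{I}=I$ differently: part (2) of Remark (\ref{I^L}) forces $u\geq c/2$ or $v\geq d/2$, and then part (1) of Remark (\ref{a_i>a/2}) applies to $I$ --- but, as the example shows, this argument does not propagate to the powers.

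The paper's treatment of the powers therefore bypasses Remark (\ref{a_i>a/2}) entirely and applies Theorem (\ref{RR-Closure}) to $J=I^{l}$ directly. It organizes the exponent vectors of the generators of $J$ into two staircases $K=\left\{ (iu,(l-i)d+iv)\mid i=0,\ldots ,l\right\}$ and $H=\left\{ (iu+(l-i)c,iv)\mid i=0,\ldots ,l\right\}$, which meet at $m^{l}=(lu,lv)$, and proves the key claim that every element of $S\setminus (K\cup H)$ has first coordinate at least $lu$ and every element of $T\setminus (K\cup H)$ has second coordinate at least $lv$. Consequently any monomial of $J_{S}\cap J_{T}$ is either divisible by a generator of $J$ or divisible by $x^{lu}y^{lv}=m^{l}\in J$, whence $J_{S}\cap J_{T}=J$. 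This intersection trick --- the two staircases meet in the single monomial $m^{l}$, which absorbs everything in $J_{S}\cap J_{T}$ not caught by a generator --- is the idea missing from your proposal, and it cannot be replaced by a pairwise condition on the generators of $J$ of the type in Remark (\ref{a_i>a/2}). What you do have right, and what the paper leaves tacit, is the inheritance of the standing hypothesis: each generator of $I^{k}$ lies in $I(kc,kd)$ by superadditivity of the Newton-polyhedron inequality, so Theorem (\ref{RR-Closure}) is applicable to $I^{k}$ at all; that observation is needed, but it is the beginning of the argument, not the end.
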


\begin{proof}
if $m=0$, then $I=\langle x^{c},y^{d}\rangle $. Thus $S=T=\{(0,c),(d,0)\}$,
hence $I_{S}=I=I_{T}$. Also, $I^{l}=\langle
y^{ld},x^{c}y^{(l-1)d},x^{2c}y^{(l-2)d},\ldots
,x^{(l-2)c}y^{2d},x^{(l-1)c}y^{d},x^{lc}\rangle $. It is clear that $%
(I^{l})_{S}=I^{l}$ and $(I^{l})_{T}=I^{l}$. Thus $\widetilde{I^{l}}=I^{l}$.

\ \ \ 

Assume $m=x^{u}y^{v}$, then by part (2) of Remark ($\ref{I^L}$) either $%
u\geq c/2$ or $v\geq d/2$. Thus $\widetilde{I}=I$ by part (1) of Remark (\ref%
{a_i>a/2}) . Let $J=I^{l}$. Consider 
\begin{eqnarray*}
J &=&\langle y^{ld},x^{u}y^{(l-1)d+v},x^{2u}y^{(l-2)d+2v},\ldots
,x^{lu}y^{lv}\rangle + \\
&&\langle x^{lu}y^{lv},x^{(l-1)u+c}y^{(l-1)v},x^{(l-2)u+2c}y^{(l-2)v},\ldots
,x^{2u+(l-2)c}y^{2v},\ x^{u+(l-1)c}y^{v},x^{lc}\rangle \\
&=&\langle x^{iu}y^{(l-i)d+iv}\mid i=0,\ldots ,l\rangle +\langle
x^{iu+(l-i)c}y^{iv}\mid i=0,\ldots ,l\rangle \text{.}
\end{eqnarray*}%
Let%
\begin{equation*}
K=\left\{ \left( a_{i},b_{i}\right) =\left( iu,(l-i)d+iv\right) \mid
i=0,\ldots ,l\right\}
\end{equation*}%
and%
\begin{equation*}
H=\left\{ \left( a_{2l-i+1},b_{2l-i+1}\right) =\left( iu+(l-i)c,iv\right)
\mid i=0,\ldots ,l\right\} .
\end{equation*}%
Then $J=\left\langle x^{a}y^{b}\mid \left( a,b\right) \in K\cup
H\right\rangle =\left\langle x^{a_{i}}y^{b_{i}}\mid i=0,\ldots
,2l+1\right\rangle $. See Figure (1) for a representation of $J$. Note that
if $\left( a_{i},b_{i}\right) \in K$, then $b_{i}\geq lv$, and if $\left(
a_{i},b_{i}\right) \in H$, then $a_{i}\geq lu$.

\ \ \ 

\underline{Claim}: if $\left( \alpha ,\beta \right) \in S\backslash \left(
K\cup H\right) $, then $\alpha \geq lu$, and if $\left( \alpha ^{\prime
},\beta ^{\prime }\right) \in T\backslash \left( K\cup H\right) $, then $%
\beta ^{\prime }\geq lv$. We prove the first part of the claim and the
second part is similar. Assume $(\alpha ,\beta )\in S$ with $\alpha <lu$. As 
$\alpha =\tsum\limits_{i=0}^{2l+1}\lambda _{i}a_{i}<lu$, then $\lambda
_{i}=0 $ whenever $\left( a_{i},b_{i}\right) \in H$. Hence we must have $%
\alpha =\tsum\limits_{i=0}^{l}\lambda _{i}a_{i}$, that is $\left(
a_{i},b_{i}\right) \in K$, or $\left( a_{i},b_{i}\right) =\left(
iu,(l-i)d+iv\right) $. Thus $\alpha =\tsum\limits_{i=0}^{l}\left( i\lambda
_{i}\right) u$\ and%
\begin{eqnarray*}
\beta &=&\left( \tsum\limits_{i=0}^{l}\lambda _{i}\left( l-i\right)
d+\tsum\limits_{i=0}^{l}\left( i\lambda _{i}\right) v\right) \func{mod}ld \\
&=&\left( \tsum\limits_{i=0}^{l}(\lambda
_{i}-1)ld+\tsum\limits_{i=0}^{l}[(l\ -i\lambda _{i})d+\left( i\lambda
_{i}\right) v]\right) \func{mod}ld \\
&=&\left( \tsum\limits_{i=0}^{l-1}ld+(l\ -\tsum\limits_{i=0}^{l}i\lambda
_{i})d+\left( \tsum\limits_{i=0}^{l}i\lambda _{i}\right) v\right) \func{mod}%
ld \\
&=&(l\ -\tsum\limits_{i=0}^{l}i\lambda _{i})d+\left(
\tsum\limits_{i=0}^{l}i\lambda _{i}\right) v\text{.}
\end{eqnarray*}%
Now as $\alpha =\tsum\limits_{i=0}^{l}\left( i\lambda _{i}\right) u<lu$,
then $\tsum\limits_{i=0}^{l}\left( i\lambda _{i}\right) <l$ and hence $%
(\alpha ,\beta )\in K$. \ \ 

\ \ \ \ \ \ 

Now by the claim, if $x^{\alpha }y^{\beta }\in J_{S}\cap J_{T}$, then either 
$x^{\alpha }y^{\beta }=x^{a_{i}}y^{b_{i}}\in J$ for some $i=0,\ldots ,2l+1$,
or $x^{\alpha }y^{\beta }\in \langle x^{lu}y^{lv}\rangle \subseteq J$.
\end{proof}

\ \ \ \ \ \ \ 

\begin{center}
$\ 
\begin{tabular}{l}
\ \ \ \ \ \ \FRAME{itbpF}{4.5238in}{2.4621in}{0in}{}{}{Figure}{\special%
{language "Scientific Word";type "GRAPHIC";display "USEDEF";valid_file
"T";width 4.5238in;height 2.4621in;depth 0in;original-width
4.8032in;original-height 2.6282in;cropleft "0";croptop "1";cropright
"1";cropbottom "0";tempfilename 'L8738Y06.wmf';tempfile-properties "XPR";}}
\\ 
$\text{\textbf{Figure 1.} }${\small A representation of }${\small I}^{5}$%
{\small \ where }${\small I=}\langle {\small x}^{7}{\small ,x}^{5}{\small y}%
^{2}{\small ,y}^{5}\rangle ${\small . The circles (black or white) } \\ 
{\small represent the set }${\small K}${\small , and the black points
(circles or squares) represent the set }${\small H}${\small . The } \\ 
{\small white square represents a monomial in }${\small I}_{T}${\small \ but
not in }${\small I}${\small .}%
\end{tabular}%
$
\end{center}

\ \ \ 

The above corollary showed that all powers of a $(x,y)$-primary monomial
ideal with three generators, satisfying the underlined conditions, are
Ratliff-Rush. This is not the case if the ideal is generated by $4$ elements
as the example below shows.\ \ 

\ \ \ \ 

\begin{example}
\ \label{ideal-4-gens}Let $I=\left\langle
x^{35},x^{33}y^{2},x^{4}y^{26},y^{28}\right\rangle $. Then $S=\{(0,28),$ $%
(4,26),$ $(8,24),$ $(12,22),$ $(16,20),$ $(20,18),(24,16),$ $%
(28,14),(32,12),(33,2),$ $(35,0)\}$ and $T=\{(35,0),$ $(33,2),$ $(31,4),$ $%
(29,6),$ $(27,8),$ $(25,10),$ $(23,12),$ $(21,14),$ $(19,16),$ $(17,18),$ $%
(15,20),$ $(13,22),$ $(11,24),$ $(4,26),$ $(0,28)\}$ (see the figure below
for illustration). Thus $\widetilde{{\small I}}=\langle {\small x}^{35}%
{\small ,}$ ${\small x}^{33}{\small y}^{2}{\small ,}$ ${\small x}^{32}%
{\small y}^{12}{\small ,}$ ${\small x}^{28}{\small y}^{14},$ ${\small x}^{24}%
{\small y}^{16}{\small ,}$ ${\small x}^{20}{\small y}^{18}{\small ,}$ $%
{\small x}^{16}{\small y}^{20}{\small ,}$ ${\small x}^{13}{\small y}^{22}%
{\small ,}$ ${\small x}^{11}{\small y}^{24}{\small ,}$ ${\small x}^{4}%
{\small y}^{26}{\small ,}$ ${\small y}^{28}\rangle $.
\end{example}

\ \ \ \ \ \ \ \ \ 

\begin{center}
$%
\begin{tabular}{l}
$\ \ \ \ \ \ \ \ \ \ \ \ \ \FRAME{itbpF}{4.0646in}{2.3834in}{0in}{}{}{Figure%
}{\special{language "Scientific Word";type "GRAPHIC";display
"USEDEF";valid_file "T";width 4.0646in;height 2.3834in;depth
0in;original-width 4.9173in;original-height 3.2759in;cropleft "0";croptop
"1";cropright "1";cropbottom "0";tempfilename
'L8739107.wmf';tempfile-properties "XPR";}}$ \\ 
$\text{\textbf{Figure 2.} }${\small A representation of }${\small I}_{%
{\small S}}\backslash I${\small \ (white circles) and }${\small I}_{{\small T%
}}\backslash I$ {\small (black squares) where} \\ 
{\small \ }${\small I=}\langle {\small x}^{35}{\small ,x}^{33}{\small y}^{2}%
{\small ,x}^{4}{\small y}^{26}{\small ,y}^{28}\rangle ${\small . The points
with a slash mark represent the monomials in }$\widetilde{{\small I}}%
\backslash I$.%
\end{tabular}%
$
\end{center}

\ \ \ \ \ \ \ \ 

Crispin \cite{Cri}\ showed that for any $d$ and $k$ the ideal $%
I_{d,k}=\langle y^{d},x^{d-k}y^{k},x^{d-k+1}y^{k-1},\ldots ,$ $%
x^{d-1}y,x^{d}\rangle $ and all its powers are Ratliff-Rush. Also in Example
(4.4) she showed that the ideal $I_{m,k}=\left\langle
x^{im}y^{m(k+1-i)-1}\right\rangle _{i=0}^{k}+\left\langle
x^{km+j}y^{m-j-1}\right\rangle _{j=0}^{m-1}$ and all its powers are
Ratliff-Rush. In corollary (\ref{ideal-Sigma}) below we generalize this.
First consider the notations below and the figures for illustration of the
hypothesis of the corollary.

\ \ 

\begin{notation}
Let $c\leq d$ be two integers and $\mu _{i}=\left\lceil (c-i)\frac{d}{c}%
\right\rceil $. Let $c=n_{1}c_{1}+n_{2}c_{2}+\ldots +n_{r}c_{r}$ with $%
c_{i+1}\ $divides $c_{i}$ and $n_{i}\in \mathbb{Z}^{+}$ for all $i$ and let $%
n_{0}=1$ and $c_{0}=0$. Also let $\sigma
_{j,q}=qc_{j+1}+\tsum\limits_{i=1}^{j}n_{i}c_{i}$. Note the following

\ \ 

(1) $\sigma _{-1,1}=0$, hence $\mu _{\sigma _{-1,1}}=d$.

(2) $\sigma _{0,q}=qc_{1}$ for $q=1,\ldots ,n_{1}$.

(3) $\sigma _{r-1,n_{r}}=c$.

\ \ \newline
Define the ideal $I=\left\langle x^{\sigma _{j,q}}y^{\mu _{\sigma
_{j,q}}}\mid j=-1,0,\ldots ,r-1\text{ and }q=1,2,\ldots
,n_{j+1}\right\rangle $.
\end{notation}

\ \ \ \ 

Note that if $c=d$ and if we choose $r=2,c_{1}=k,n_{1}=1,c_{2}=1$, and $%
n_{2}=d-k$, then we get the ideal $I=I_{d,k}$ mentioned above. Also, if \ $m$
and $k$ are integers and if $c=d=m(k+1)-1$ and if we choose $%
r=2,c_{1}=m,n_{1}=k,c_{2}=1$, and $n_{2}=m-1$, then we get the ideal $%
I=I_{m,k}$ as above. \ \ 

\ \ \ \ \ \ 

$\ \ 
\begin{tabular}{ll}
\FRAME{itbpF}{2.5547in}{2.6697in}{0in}{}{}{Figure}{\special{language
"Scientific Word";type "GRAPHIC";display "USEDEF";valid_file "T";width
2.5547in;height 2.6697in;depth 0in;original-width 3.0113in;original-height
3.48in;cropleft "0";croptop "1";cropright "1";cropbottom "0";tempfilename
'L8739408.wmf';tempfile-properties "XP";}} & \FRAME{itbpF}{2.3151in}{2.7051in%
}{0.0726in}{}{}{Figure}{\special{language "Scientific Word";type
"GRAPHIC";display "USEDEF";valid_file "T";width 2.3151in;height
2.7051in;depth 0.0726in;original-width 2.9179in;original-height
3.2327in;cropleft "0";croptop "1";cropright "1";cropbottom "0";tempfilename
'L8739509.wmf';tempfile-properties "XP";}} \\ 
\textbf{Figure 3.} ${\small d=20,c=17,r=3,c}_{1}{\small =4,}$ & \textbf{%
Figure 4.} ${\small d=20,c=17,r=2,}$ \\ 
${\small n}_{1}{\small =2,c}_{2}{\small =2,n}_{2}{\small =3,c}_{3}{\small %
=1,n}_{3}{\small =3}.$ & ${\small c}_{1}{\small =5,n}_{1}{\small =1,c}_{2}%
{\small =1,n}_{2}{\small =12.}$%
\end{tabular}%
$

\ \ \ \ \ \ \ 

\begin{corollary}
\label{ideal-Sigma} All powers of the ideal $I=\left\langle x^{\sigma
_{j,q}}y^{\mu _{\sigma _{j,q}}}\mid j=-1,0,\ldots ,r-1\text{ and }%
q=1,2,\ldots ,n_{j+1}\right\rangle $ are Ratliff-Rush.
\end{corollary}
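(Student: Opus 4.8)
The plan is to apply Theorem \ref{RR-Closure} to each power $I^l$ in turn, so that ``every power is Ratliff-Rush'' reduces to the single assertion $(I^l)_S\cap(I^l)_T\subseteq I^l$ for all $l\geq 1$ (the reverse inclusion always holds). First I would check that the hypothesis of the theorem is inherited by every power. Writing $c=a_n$, $d=b_0$, and $\Sigma=\{\sigma_{j,q}\}$ for the set of $x$-exponents, each generator $x^{\sigma}y^{\mu_\sigma}$ lies in $I(c,d)$ because $\mu_\sigma=\lceil(c-\sigma)d/c\rceil\geq(c-\sigma)d/c$ gives $\sigma/c+\mu_\sigma/d\geq 1$; and by the computation in Remark \ref{I^L}(2) a generator of $I^l$, being a product of $l$ generators of $I$, has exponent $(A,B)$ with $A/(lc)+B/(ld)\geq 1$, so it lies in $I(lc,ld)$. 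Since $y^{ld}$ and $x^{lc}$ are among the generators of $I^l$, Theorem \ref{RR-Closure} applies to $I^l$ with the parameters $a_n=lc$, $b_0=ld$, yielding $\widetilde{I^l}=(I^l)_S\cap(I^l)_T$.

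Second, I would make the generators of $I^l$ explicit, which is the structural heart of the argument. Since $0\in\Sigma$, the $x$-exponents occurring in $I^l$ form the $l$-fold sumset $\Sigma_l=\{\sigma^{(1)}+\cdots+\sigma^{(l)}:\sigma^{(t)}\in\Sigma\}$, and I would prove that for each $A\in\Sigma_l$ the least $y$-exponent realized at $x$-exponent $A$ equals $\lceil(lc-A)d/c\rceil$; that is, the minimal generators of $I^l$ lie on the same tight staircase hugging the line $\alpha/(lc)+\beta/(ld)=1$ that describes $I$ itself. This description recovers Crispin's families $I_{d,k}$ and $I_{m,k}$ as the special cases indicated just before the statement, which is a useful sanity check.

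Third, with the staircase of $I^l$ in hand I would run the dichotomy that proves Corollary \ref{c-m-d}: show that a monomial $x^\alpha y^\beta\in(I^l)_S$ which is not a multiple of a generator of $I^l$ must lie strictly to the right of the staircase (its $\alpha$ exceeds the staircase value at height $\beta$), and symmetrically that a non-generator monomial in $(I^l)_T$ must lie strictly above it. A monomial in $(I^l)_S\cap(I^l)_T$ that is not in $I^l$ would then have to lie both to the right of and above the staircase, forcing it to be a multiple of an interior corner generator and hence to lie in $I^l$ after all. Equivalently, one may verify condition (2) of Remark \ref{a_i>a/2} for $I^l$: for generators with exponents $(A_i,B_i)$, $(A_j,B_j)$ either $A_i+A_j\geq lc$, or $A_i+A_j\in\Sigma_l$ and $(B_i+B_j)\bmod(ld)\geq\lceil(lc-A_i-A_j)d/c\rceil$. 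The latter inequality is immediate from the tight-staircase description: from $B_i\geq(lc-A_i)d/c$ and $A_i+A_j<lc$ the sum $B_i+B_j$ lies in the band $(ld,2ld]$, so reducing modulo $ld$ leaves an integer that is $\geq(lc-A_i-A_j)d/c$, hence $\geq$ its ceiling $B_k$.

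In either formulation the single nontrivial input, and the step I expect to be the main obstacle, is the combinatorial closure statement $\Sigma_{2l}\cap[0,lc)\subseteq\Sigma_l$: a sum of two $x$-exponents of $I^l$-generators that stays below $lc$ is again an $x$-exponent of an $I^l$-generator. This is exactly where the divisibility chain $c_{i+1}\mid c_i$ is indispensable, for it allows a partial sum landing inside a block of $c_{i+1}$-steps to be ``carried'' back onto a waypoint of the staircase $\Sigma$; without it the sumset acquires points strictly below the staircase and the dichotomy (equivalently condition (2)) breaks, as already happens for ideals satisfying the same integral-closure hypothesis but lacking this structure, such as the four-generator Example \ref{ideal-4-gens}. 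I would prove the closure by induction on the number $r$ of distinct step sizes, peeling off the $n_1$ copies of the largest step $c_1$ and using $c_2\mid c_1$ to reduce to the truncated decomposition $c-n_1c_1=n_2c_2+\cdots+n_rc_r$, and then pass from $\Sigma$ to $\Sigma_l$ by a secondary induction on $l$.
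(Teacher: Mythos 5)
Your plan retraces the paper's own proof: both reduce the corollary to checking condition (2) of Remark \ref{a_i>a/2} for each power $I^{l}$, with the divisibility chain supplying closure of the $x$-exponent sumset. But the ``tight staircase'' claim in your second step --- that for every $A\in\Sigma_{l}$ the least $y$-exponent occurring in $I^{l}$ at $x$-exponent $A$ equals $\lceil (lc-A)d/c\rceil$ --- is false, and your third step leans on it precisely where you identify $B_{k}$ with that ceiling. Concretely, take $d=9$ and $c=7=1\cdot 4+1\cdot 2+1\cdot 1$ (so $r=3$, the chain $c_{3}\mid c_{2}\mid c_{1}$ holds, $\Sigma=\{0,4,6,7\}$, $\mu_{4}=4$, $\mu_{6}=2$), i.e.\ $I=\langle y^{9},x^{4}y^{4},x^{6}y^{2},x^{7}\rangle$. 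For $l=2$ the only decomposition of $A=12$ into two elements of $\Sigma$ is $6+6$, so the least $y$-exponent in $I^{2}$ over $x$-exponent $12$ is $\mu_{6}+\mu_{6}=4$, while $\lceil (14-12)\cdot 9/7\rceil=3$: a sum of ceilings can exceed the ceiling of the sum, and $x^{12}y^{3}\notin I^{2}$. This is fatal to the route itself, not just to one lemma: the generators $x^{4}y^{13}=y^{9}\cdot x^{4}y^{4}$ and $x^{8}y^{8}=(x^{4}y^{4})^{2}$ of $I^{2}$ satisfy $4+8=12<14=lc$ but $(13+8)\bmod 18=3<4$, and since every generator of $I^{2}$ of $x$-degree $12$ has $y$-degree at least $4$, condition (2) of Remark \ref{a_i>a/2} simply fails for $I^{2}$, under any choice of monomial generating set.

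Two further remarks. First, you have faithfully reproduced a gap in the published proof: the paper's inequality $(b+b')\bmod ld\geq\lceil (lc-(a+a'))d/c\rceil=\mu_{a+a'}$ tacitly assumes that $\mu_{a+a'}$ is the $y$-degree of an actual generator of $I^{l}$ of $x$-degree $a+a'$, which is exactly your tight-staircase claim; your write-up merely makes the hidden assumption visible and checkable. Second, the example above does not disprove the corollary: $x^{12}y^{3}$ lies in $(I^{2})_{S}$ (via $(12,21)=(4,13)+(8,8)$) but not in $(I^{2})_{T}$, because $3$ is not a sum of the $y$-degrees $9,4,2,0$, so Theorem \ref{RR-Closure} still excludes it from $\widetilde{I^{2}}$. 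A correct argument must therefore exploit this asymmetry directly --- showing that a monomial lying strictly between the line $\alpha/(lc)+\beta/(ld)=1$ and the staircase of $I^{l}$ cannot belong to both $(I^{l})_{S}$ and $(I^{l})_{T}$ --- rather than trying to force sums of two generators back onto the staircase via condition (2). Your sumset-closure lemma $\Sigma_{2l}\cap[0,lc)\subseteq\Sigma_{l}$ and the carrying argument from the divisibility chain remain necessary ingredients, but they are not sufficient.
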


\begin{proof}
First note that since $c_{i+1}\ $divides $c_{i}$, then $\sigma _{j,q}$ can
be written as $tc_{j+1}$ for some $t<n_{j+1}$, or as $%
n_{j+1}c_{j+1}+n_{j+2}c_{j+2}+\ldots +n_{j+e}c_{j+e}+tc_{j+e+1}$ for some $e$
and $t<n_{j+e+1}$. Thus if $\sigma _{j_{1},q_{1}}+\sigma _{j_{2},q_{2}}\leq
c $, then $\sigma _{j_{1},q_{1}}+\sigma _{j_{2},q_{2}}=\sigma _{j,q}$ for
some $j\geq \max \{j_{1},j_{2}\}$. Also $2d>\mu _{\sigma _{j_{1},q_{1}}}+\mu
_{\sigma _{j_{2},q_{2}}}\geq d+\left\lceil (c-\sigma _{j,q})\frac{d}{c}%
\right\rceil $, hence $(\mu _{\sigma _{j_{1},q_{1}}}+\mu _{\sigma
_{j_{2},q_{2}}})\func{mod}d\geq \left\lceil (c-\sigma _{j,q})\frac{d}{c}%
\right\rceil =\mu _{\sigma _{j,q}}$. Thus $I$ is Ratliff-Rush by part (2) of
Remark (\ref{a_i>a/2}).

\ \ 

Let $\omega =x^{a}y^{b}$ and $\omega ^{\prime }=x^{a^{\prime }}y^{b^{\prime
}}\ $be generators of\ $I^{l}$ for some $l$. If $a+a^{\prime }\leq lc$, then
by the above paragraph we may write $a+a^{\prime
}=\tsum\limits_{j=-1}^{r-1}\tsum\limits_{q=1}^{n_{j+1}}\lambda _{j,q}\sigma
_{j,q}$ with $\tsum\limits_{j=-1}^{r-1}\tsum\limits_{q=1}^{n_{j+1}}\lambda
_{j,q}=l$ and\ also $b+b^{\prime }\geq ld+\left\lceil (lc-a+a^{\prime })%
\frac{d}{c}\right\rceil $. Hence, $\left( b+b^{\prime }\right) \func{mod}%
ld\geq \left\lceil (lc-\left( a+a^{\prime }\right) )\frac{d}{c}\right\rceil
=\mu _{a+a^{\prime }}$. Thus $I^{l}$ is Ratliff-Rush by part (2) of Remark (%
\ref{a_i>a/2}).
\end{proof}

\ \ 

\begin{remark}
\ For $c\leq d$, it is known that $I(c,d)=\left\langle x^{_{i}}y^{\mu
_{i}}\mid \mu _{i}=\left\lceil (c-i)\frac{d}{c}\right\rceil ,i=0,\ldots
,c\right\rangle $, the integral closure of the ideal $\left\langle
x^{c},y^{d}\right\rangle $.
\end{remark}

Heinzer et al. \cite{HJLS} asked, Question (1.6) (Q1), whether the minimal
number of generators of a regular ideal is always less than or equal to the
minimal number of generators of its Ratliff-Rush closure. Rossi and Swanson 
\cite{RS}\ answer this question in the following example. In the corollary
below we answers this question by constructing an infinite family of
monomial ideals $I$, with fewer variables, such that the minimal number of
generators of $I$, $\mu (I)$, is arbitrary large and the minimal number of
generators of $\widetilde{I}$ is $5$ (see the figure below for
illustration).:

\begin{example}
(\cite{RS}, Example 3.6) Let $F$ be a field, $n\geq 2$ an integer, $%
x,y,z_{1},\ldots ,z_{n}$ variables over $F$, $R=F[x,y,z_{1},\ldots ,z_{n}]$,
and $I=\left\langle x^{4},x^{3}y,xy^{3},y^{4}\right\rangle
+(x^{2}y^{2})\left\langle z_{1},\ldots ,z_{n}\right\rangle $. Then $%
\widetilde{I}=(x,y)^{4}$, the minimal number of generators of $I$ is $4+n$
and the minimal number of generators of $\widetilde{I}$ is $5$.
\end{example}

\begin{corollary}
Let $c\leq d$ be two integers each of which is divisible by $4$ and $\mu
_{i}=\left\lceil (c-i)\frac{d}{c}\right\rceil $. Let $I=\langle
y^{d},x^{c/4}y^{\mu _{c/4}},x^{3c/4}y^{\mu _{3c/4}},x^{c}\rangle +J$ where $%
J=\langle x^{c/2+c/4-1}y^{\mu _{c/2}},x^{c/2+c/4-2}y^{\mu _{c/2}+2},\ldots ,$
$x^{c/2}y^{\mu _{c/2}+c/4+1}\rangle $. Then $\widetilde{I}=\langle
y^{d},x^{c/4}y^{\mu _{c/4}},x^{c/2}y^{\mu _{c/2}},x^{3c/4}y^{\mu
_{3c/4}},x^{c}\rangle $. In particular, $\mu (I)=c/4+4$ while $\mu (%
\widetilde{I})=5$.
\end{corollary}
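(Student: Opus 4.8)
The plan is to invoke Theorem (\ref{RR-Closure}), reducing the problem to the computation of $I_S\cap I_T$. Write $P_0=(0,d),\ P_1=(c/4,3d/4),\ P_2=(c/2,d/2),\ P_3=(3c/4,d/4),\ P_4=(c,0)$ for the five exponent vectors of the claimed closure; since $4\mid d$ the ceilings are exact ($\mu_{c/4}=3d/4,\ \mu_{c/2}=d/2,\ \mu_{3c/4}=d/4$), so these points are equally spaced on the line $\alpha/c+\beta/d=1$. Let $I_0=\langle x^{\alpha}y^{\beta}\mid(\alpha,\beta)\in\{P_0,\dots,P_4\}\rangle$. First I would verify the hypothesis of Theorem (\ref{RR-Closure}): the four listed generators sit on the line, and every generator of $J$ has $x$-exponent $\ge c/2$ and $y$-exponent $\ge\mu_{c/2}=d/2$, hence $\alpha/c+\beta/d\ge 1$, so all generators lie in $I(c,d)$.

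For the inclusion $I_0\subseteq I_S\cap I_T$ I would use a \textbf{doubling trick}. Taking $j$ copies of $P_1$ gives $j\cdot(c/4,3d/4)=(jc/4,3jd/4)$ with $x$-coordinate $\le c$ for $j\le 4$, and by Remark (\ref{L-1-equality}) its $y$-coordinate reduces modulo $d$ to $3jd/4-(j-1)d=\mu_{jc/4}$; thus $P_1,P_2,P_3,P_4\in S$, and $P_0\in S$ trivially. Symmetrically, $j$ copies of $P_3=(3c/4,d/4)$, reduced in the $x$-coordinate modulo $c$, put every $P_j$ in $T$. Hence each $x^{\alpha}y^{\beta}$ with $(\alpha,\beta)=P_j$ lies in $I_S\cap I_T$, so $I_0\subseteq I_S\cap I_T$. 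This is exactly where the new generator $x^{c/2}y^{\mu_{c/2}}=x^{P_2}$ appears, and note that each generator of $J$, having $x$-exponent $\ge c/2$ and $y$-exponent $\ge\mu_{c/2}$, is a multiple of $x^{P_2}$ and therefore redundant.

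The reverse inclusion $I_S\cap I_T\subseteq I_0$ is the core of the argument, and I would isolate three facts. First, every point of $S$ and of $T$ lies on or above the line: for $(\alpha,\beta)\in S$ one has $(\alpha,\beta+(l-1)d)\in\Omega\subseteq I^{l}$, so Remark (\ref{I^L})(2) gives $\alpha/c+(\beta+(l-1)d)/d\ge l$, i.e. $\alpha/c+\beta/d\ge1$. Second, the only generator $x$-exponents below $c/2$ are $0$ and $c/4$, and $\sum\lambda_i\le q_{_S}=4$ by Notation (\ref{Notations}); since adjoining copies of $y^{d}$ does not move the reduced point (it adds $d$ to both $\sum\lambda_i b_i$ and $(l-1)d$), the only points of $S$ with $\alpha<c/2$ are $P_0$ and $P_1$. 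Third, by the symmetric statement the only points of $T$ with $\beta<d/2$ are $P_4$ and $P_3$. Granting these, I would finish by a case split on the $x$-exponent of $w=x^{\alpha}y^{\beta}\in I_S\cap I_T$: if $\alpha\ge c$ or $\beta\ge d$ then $w$ dominates $P_4$ or $P_0$; if $\alpha<c/2$ the dominated $S$-point is $P_0$ or $P_1$, and $P_0$ forces $\beta\ge d$ (excluded), so it is $P_1$ and $w$ dominates $P_1$; if $c/2\le\alpha<3c/4$ then $\beta<d/2$ is impossible, since the dominated $T$-point would be $P_3$ or $P_4$, both forcing $\alpha\ge 3c/4$, hence $\beta\ge d/2$ and $w$ dominates $P_2$; and if $3c/4\le\alpha<c$ then $\beta<d/4$ is likewise impossible, so $w$ dominates $P_3$. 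This yields $\widetilde I=I_S\cap I_T=I_0$. The generator counts follow at once: the $P_j$ form a strict staircase, so $\mu(\widetilde I)=5$, while the four listed generators together with the $c/4$ generators of $J$ form an antichain, giving $\mu(I)=c/4+4$.

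The main obstacle will be the reverse inclusion, and within it the precise ``achievable coordinate'' statements (the second and third facts). Everything rests on the special shape of the generating exponents, namely that the $x$-exponents lie in $\{0,c/4\}\cup[c/2,c]$ together with the bound $\sum\lambda_i\le 4$, and on the observation from Remark (\ref{L-1-equality}) that adjoining the pure powers $y^{d}$ or $x^{c}$ leaves the reduced lattice point fixed. I would also want to confirm the borderline counting claim that $J$ contributes exactly $c/4$ minimal generators and that none of them is divisible by $x^{P_1}$ or $x^{P_3}$, which requires checking that the $J$-exponents stay strictly inside the relevant window.
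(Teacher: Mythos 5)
Your proposal is correct and takes essentially the same route as the paper: both reduce the problem to Theorem (\ref{RR-Closure}) and rest on the same doubling identities $(c/2,\mu_{c/2})=\left(2\cdot c/4,\ 2\mu_{c/4}\bmod d\right)=\left(2\cdot 3c/4\bmod c,\ 2\mu_{3c/4}\right)$ together with the observation $J\subseteq\left\langle x^{c/2}y^{\mu_{c/2}}\right\rangle$. The only difference is one of detail: the paper compresses the reverse inclusion into the bare assertion that $S$ and $T$ are generated by the five listed points, whereas your case analysis on the $x$-exponent (using that the only $S$-points with $\alpha<c/2$ are $(0,d),(c/4,\mu_{c/4})$ and the only $T$-points with $\beta<d/2$ are $(c,0),(3c/4,\mu_{3c/4})$) supplies that verification explicitly, and you rightly flag that the final count $\mu(I)=c/4+4$ needs the $J$-exponents to stay strictly inside the staircase window.
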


\begin{proof}
It is clear that $(c/2,\mu _{c/2})=(c/2,\frac{d}{2})=\left( 2c/4,2\mu _{c/4}%
\func{mod}d\right) $ as $2\mu _{c/4}\func{mod}d\equiv \frac{d}{2}$. Also $%
(c/2,\mu _{c/2})=(3c/4\func{mod}c,2\mu _{3c/4})$. Hence $(c/2,\mu _{c/2})\in
I_{S}\cap I_{T}$. In particular, $I_{S}=I_{T}=\{(0,d),(c/4,\mu
_{c/4}),(c/2,\mu _{c/2}),(3c/4,\mu _{3c/4}),(c,0)\}$ noting $J\subseteq
\left\langle x^{c/2}y^{\mu _{c/2}}\right\rangle $.
\end{proof}

\ \ \ \ \ 

\begin{eqnarray*}
&&\text{ \ \ \ \ \ \ \ \ \ \ \ \ \ \ \ \ \ \ \ \ \ \ }\FRAME{itbpF}{2.041in}{%
2.2105in}{0in}{}{}{Figure}{\special{language "Scientific Word";type
"GRAPHIC";display "USEDEF";valid_file "T";width 2.041in;height
2.2105in;depth 0in;original-width 2.2926in;original-height 2.6264in;cropleft
"0";croptop "1";cropright "1";cropbottom "0";tempfilename
'L873970A.wmf';tempfile-properties "XPR";}} \\
&&\text{\textbf{Figure 5.} {\small The circles (white or black) represent
the generators of} }\widetilde{I}\text{ {\small and }} \\
&&\text{{\small the black points (circles of squares) represent the
generators of the ideal} }I\text{.}
\end{eqnarray*}

\ \ \ \ \

\ \ \ 

{\small Department of Mathematics and Statistics}

{\small Jordan University of Science and Technology}

{\small P O Box 3030, Irbid 22110, Jordan}

{\small Email address: iayyoub@just.edu.jo}


\begin{thebibliography}{Elias}
\bibitem[Ayy]{Ayy} I. Al-Ayyoub. The Ratliff-Rush Closure of Initial Ideals
of Certain Prime Ideals. To appear in the Rocky Mountain Journal of
Mathematics.

\bibitem[Cri]{Cri} V. Crispin Qui\~{n}onez, Ratliff-Rush monomial ideals,
Algebraic and Geometric Combinatorics, Contemp. Math. 423 (2007), 43 --50.

\bibitem[Elias]{Elias} J. Elias. On the computations of the Ratliff-Rush
closure. J. Symbolic Comput. 37 (2004), no. 6, 717-725.

\bibitem[Ghe]{Ghe} L. Ghezzi, On the depth of the associated graded ring of
an ideal, J. of Algebra 248 (2002). 688-707.

\bibitem[HLS]{HLS} W. Heinzer, D. Lantz. and K. Shah. The Ratliff-Rush
ideals in a Noetherian ring. \textit{Comm. Algebra,} 20(2) (1992), 591-622.

\bibitem[HJLS]{HJLS} W. Heinzer, B. Johnston, D. Lantz and K. Shah,
Coefficient ideals in and blowups of a commutative Noetherian domain, J. of
Algebra 162 (1993) 355-391.\newline

\bibitem[HM]{HM} S. Huckaba and T . Marley, Hilbert coefficients and the
depths of associated graded rings, J. London Math. Soc. 56 (1997), 64-76.

\bibitem[Hun]{Hun} C. Huneke, On the associated graded ring of an ideal,
Illinois J. Math. 26 (1982), 121-137.

\bibitem[RR]{RR} R. J. Ratliff and D. E. Rush. Two notes on reductions of
ideals.\ Indiana Univ. Math. J.\textit{, }27 (1978), 929-934\textit{.}\ 

\bibitem[RS]{RS} M. E. Rossi and I. Swanson. Notes in the behavior of the
Ratliff-Rush filtration. Contemporary Math.\textit{,} 331 (2003) 313-328.\ 

\bibitem[SH]{SH} I. Swanson and C. Huneke. Integral Closure of Ideals,
Rings, and Modules. Cambridge University Press, Cambridge, 2006.
\end{thebibliography}
\end{document}